\newtheorem{Thm}{Theorem} %[section]
\newaliascnt{Lem}{Thm}
\newtheorem{Lem}[Lem]{Lemma}
\newaliascnt{Prop}{Thm}
\newtheorem{Prop}[Prop]{Proposition}
\renewcommand{\phi}{\varphi}
\newcommand{\C}{\operatorname{C}}
\newcommand{\N}{\operatorname{N}}
\newcommand{\Z}{\operatorname{Z}}
\newcommand{\QQ}{\mathbb{Q}}
\newcommand{\FF}{\mathbb{F}}
\newcommand{\Aut}{\operatorname{Aut}}
\newcommand{\pcore}{\operatorname{O}}
\newcommand{\GL}{\operatorname{GL}}
\newcommand{\SL}{\operatorname{SL}}
\newcommand{\PSp}{\operatorname{PSp}}
\newcommand{\PSU}{\operatorname{PSU}}
\newcommand{\SU}{\operatorname{SU}}
\newcommand{\PSL}{\operatorname{PSL}}
\newcommand{\Sz}{\operatorname{Sz}}
\newcommand{\Irr}{\operatorname{Irr}}
\newcommand{\Sp}{\operatorname{Sp}}
\newcommand{\F}{\operatorname{F}}
\newcommand{\Syl}{\operatorname{Syl}}
\mathchardef\ordinarycolon\mathcode`\:  %defines a nice ":=" 
\title{Groups whose elements are not conjugate to their powers}
\author{Andreas Bächle\footnote{Vakgroep Wiskunde, Vrije Universiteit Brussel, Pleinlaan 2, 1050 Brussels, Belgium, \href{mailto:abachle@vub.ac.be}{abachle@vub.ac.be}} \ and Benjamin Sambale\footnote{Fachbereich Mathematik, TU Kaiserslautern, 67653 Kaiserslautern, Germany, \href{mailto:sambale@mathematik.uni-kl.de}{sambale@mathematik.uni-kl.de}}}
\date{\today}
\begin{document}
\frenchspacing
\maketitle
\begin{abstract}\noindent
We call a finite group irrational if none of its elements is conjugate to a distinct power of itself. We prove that those groups are solvable and describe certain classes of these groups, where the above property is only required for $p$-elements, for $p$ from a prescribed set of primes.
\end{abstract}

\textbf{Keywords:} rational groups\\
\textbf{AMS classification:} 20D20

\section{Introduction}
All groups considered in this article are finite. A classical theme in finite group theory is the investigation of \emph{rational} groups. These are groups $G$ such that $x,y\in G$ are conjugate whenever $\langle x\rangle=\langle y\rangle$. Prominent examples of rational groups are symmetric groups.
In this paper we study groups with the opposite property, that is we call $G$ \emph{irrational} if $\langle xyx^{-1}\rangle=\langle y\rangle$ implies $xyx^{-1}=y$ for all $x,y\in G$; that is no element of $G$ is conjugate to a distinct power of itself. This property can be rephrased in terms of the character table.
Somewhat surprisingly, it seems that irrational groups have not been systematically studied in the literature so far (see \cite[Research Problem 344]{B}). 
It is easy to see that all abelian groups and all nilpotent group with squarefree exponent are irrational. Since there are many $p$-groups with exponent $p>2$, there is no hope to classify irrational groups completely. Even for $p=2$ there are already $656$ irrational groups of order $2^9$. 

To make things more accessible we define $\pi$-irrational groups for a set of primes $\pi$ in the next section. We will show that $p$-irrational groups are $p$-solvable provided $p\ge 5$. It follows easily that $2'$-irrational groups are solvable. On the other hand, we classify the simple $2$-irrational groups. Finally, we provide examples to show that the structure of irrational groups is quite unrestricted. 

\section{General results}

Our notation is fairly standard and follows \cite{Gorenstein} for instance.
For a set of primes $\pi$ we say that a finite group $G$ is \emph{$\pi$-irrational} if 
\begin{equation*}\label{irr}
\C_G(x)=\N_G(\langle x\rangle)\qquad\text{for all $\pi$-elements } x\in G.
\end{equation*}
Of course we say $p$-irrational instead of $\{p\}$-irrational. It is easy to see that $G$ is $\pi$-irrational if and only if $G$ is $p$-irrational for all $p\in\pi$. Finally, $G$ is called \emph{irrational} if $G$ is $\pi(G)$-irrational. This is easily seen to be equivalent to the definition given in the introduction.
For $g\in G$ we denote $\QQ(g):=\QQ(\chi(g):\chi\in\Irr(G))$. The $n$-th cyclotomic field is denoted by $\QQ_n$. Using \cite[Theorem~4]{Navarro03}, one can show that $G$ is $\pi$-irrational if and only if 
\begin{equation*}
\QQ(x)=\QQ_{|\langle x\rangle|}\qquad\text{for all $\pi$-elements } x\in G.
\end{equation*}
Consequently, one can read off whether a group is irrational from the character table, provided one knows the orders of conjugacy class representatives. However, the two non-abelian groups of order $p^3$ for odd primes $p$ show that the stripped character table is not sufficient for this.

Obviously, subgroups and direct products of $\pi$-irrational group are $\pi$-irrational. However, quotients of $\pi$-irrational groups are not always $\pi$-irrational. For example $\texttt{SmallGroup}(16,3)$ from the small groups library~\cite{GAP48} (which is of the form $(C_4 \times C_2) \rtimes C_2$) is irrational, but has a quotient of type $D_8$. Recall that in comparison quotients of rational groups are rational, but subgroups of rational groups in general fail to be rational. 

\begin{Lem}\label{pinormal}
If $G$ is $\pi$-irrational and $N$ is a normal $\pi'$-subgroup of $G$. Then $G/N$ is $\pi$-irrational.
\end{Lem}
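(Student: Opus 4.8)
The plan is to verify the defining condition directly for the quotient: for every $\pi$-element $\bar x\in G/N$ I want $\C_{G/N}(\bar x)=\N_{G/N}(\langle\bar x\rangle)$. Since the inclusion $\C_{G/N}(\bar x)\subseteq\N_{G/N}(\langle\bar x\rangle)$ is automatic, the whole content is the reverse inclusion, so I would fix $\bar g\in\N_{G/N}(\langle\bar x\rangle)$ and aim to show that $\bar g$ centralises $\bar x$.

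The first key step is a lifting observation: because $N$ is a normal $\pi'$-subgroup, every $\pi$-element $\bar x$ of $G/N$ is the image of a $\pi$-element $x\in G$. Indeed, starting from any preimage $g$ of $\bar x$ and taking the commuting decomposition $g=g_\pi g_{\pi'}$ into $\pi$- and $\pi'$-parts, the image of $g_{\pi'}$ is the $\pi'$-part of the $\pi$-element $\bar x$, hence trivial, so $g_{\pi'}\in N$ and $x:=g_\pi$ is a $\pi$-element with $xN=\bar x$. A short order count (using $N\cap\langle x\rangle=1$) also shows $|x|=|\bar x|$.

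Now writing $\bar g\bar x\bar g^{-1}=\bar x^i$ with $\gcd(i,|\bar x|)=1$ (as $\bar g$ induces an automorphism of $\langle\bar x\rangle$), the relation lifts to $gxg^{-1}\equiv x^i\pmod N$. Both $gxg^{-1}$ and $x^i$ are $\pi$-elements lying in the same coset of $N$, and $\langle x^i\rangle=\langle x\rangle$, so inside the group $L:=N\langle x\rangle=N\rtimes\langle x\rangle$ the cyclic groups $\langle gxg^{-1}\rangle$ and $\langle x^i\rangle$ are two complements of $N$. By the Schur--Zassenhaus theorem (conjugacy of complements, valid here since $\langle x\rangle$ is solvable) they are conjugate by some $n\in N$, and comparing images in $L/N$ pins this down to $n\,gxg^{-1}\,n^{-1}=x^i$. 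Thus $ng$ normalises $\langle x\rangle$, and since $G$ is $\pi$-irrational we get $ng\in\C_G(x)$, forcing $x^i=x$. Reading this back in $G/N$ yields $\bar g\bar x\bar g^{-1}=\bar x$, i.e. $\bar g\in\C_{G/N}(\bar x)$, which is what I wanted.

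I expect the main obstacle to be this middle step: controlling the preimages of the relation $\bar g\bar x\bar g^{-1}=\bar x^i$ well enough to replace $g$ by an element $ng$ (with $n\in N$) that conjugates $x$ exactly to $x^i$ in $G$, rather than merely modulo $N$. This is precisely where the $\pi'$-hypothesis on $N$ is essential, and the Schur--Zassenhaus conjugacy of the Hall $\pi$-subgroups of $L$ is the tool that makes it work.
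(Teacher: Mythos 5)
Your proof is correct and takes essentially the same route as the paper: lift $\bar x$ to a $\pi$-element $x$, use Schur--Zassenhaus conjugacy of the complements of $N$ in $\langle x\rangle N$ to adjust the normalising element by some $n\in N$, and then apply $\pi$-irrationality of $G$ to force centralisation. Your explicit element-wise correction by $n$ is exactly the Frattini argument that the paper invokes at the subgroup level via $\N_G(\langle x\rangle N)=\N_G(\langle x\rangle)N$.
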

\begin{proof}
Let $gN\in G/N$ be a $\pi$-element. We may assume that $g$ is a $\pi$-element. By the Schur-Zassenhaus theorem, the subgroups of order $|\langle g\rangle|$ in $\langle g\rangle N$ are conjugate under $N$. Hence, the Frattini argument yields
\[\N_{G/N}(\langle gN\rangle)\le\N_G(\langle g\rangle N)/N=\N_G(\langle g\rangle)N/N=\C_G(g)N/N\le\C_{G/N}(gN)\le\N_{G/N}(\langle gN\rangle).\qedhere\]
\end{proof}

\begin{Thm}\label{pirr}
Let $p\ge 5$ be a prime. Then every $p$-irrational group is $p$-solvable.
\end{Thm}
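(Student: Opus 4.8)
The plan is to argue by contradiction through a minimal counterexample. Suppose the theorem fails and let $G$ be a $p$-irrational group of minimal order that is not $p$-solvable. Since subgroups of $p$-irrational groups are $p$-irrational, minimality forces every proper subgroup of $G$ to be $p$-solvable. Using \autoref{pinormal}, the quotient $G/\pcore_{p'}(G)$ is again $p$-irrational, and it is $p$-solvable if and only if $G$ is; hence by minimality I may assume $\pcore_{p'}(G)=1$. Now let $R$ be the largest normal $p$-solvable subgroup of $G$, which is proper since $G$ is not $p$-solvable. Every proper normal subgroup of $G$ is a proper subgroup, hence $p$-solvable and contained in $R$, so $G/R$ is simple; it is non-abelian and of order divisible by $p$, for otherwise $G$ would be $p$-solvable.

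The heart of the proof is the following statement, which I would establish using the classification of finite simple groups: for $p\ge5$, no non-abelian finite simple group $S$ with $p\mid|S|$ is $p$-irrational, i.e.\ $S$ has a $p$-element $x$ with $\C_S(x)\ne\N_S(\langle x\rangle)$. The role of the hypothesis $p\ge5$ becomes transparent here: in each case the relevant normalizer realizes the index-$2$ subgroup of squares of $(\ZZ/p\ZZ)^\times$ (or the full group) on $\langle x\rangle$, and this subgroup is nontrivial precisely when $p\ge5$. For $S=A_n$ with $n\ge p$, taking $x$ a $p$-cycle, the normalizer of $\langle x\rangle$ in $A_n$ induces $C_{(p-1)/2}$ on $\langle x\rangle$, nontrivial for $p\ge5$ (whereas $A_3\cong C_3$ is $3$-irrational, explaining the threshold). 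For groups of Lie type I would treat the defining and the cross characteristic separately: in cross characteristic a suitable $p$-element lies in a maximal torus $T$ and the Weyl group $\N_G(T)/T$ moves its eigenvalues nontrivially; in the defining characteristic a root element is scaled by $t^2$ as $t$ runs through a torus of a root $\SL_2$, again yielding a nontrivial power for $p\ge5$. The sporadic groups and the finitely many small exceptions are checked directly from their character tables.

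Granting this, the case $R=1$ is immediate, since then $G$ is itself non-abelian simple of order divisible by $p$. It remains to handle $R\ne1$. As $\pcore_{p'}(G)=1$ forces $\pcore_{p'}(R)=1$, the $p$-solvable group $R$ has $P:=\pcore_p(G)\ne1$. I then choose a chief factor $V\le P$, an irreducible $\FF_p[G]$-module, and consider a $p$-element $x\in V$ of order $p$. One has $x$ conjugate to a distinct power exactly when the stabiliser of the line $\langle x\rangle$ in $G/\C_G(V)\le\GL(V)$ acts by a nontrivial scalar on $\langle x\rangle$. Using that $S=G/R$ is a composition factor of $G/\C_G(V)$, I would show that for $p\ge5$ some line-stabiliser must contain such a scalar, producing a $p$-element conjugate to a distinct power and contradicting $p$-irrationality.

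The main obstacle is the uniform verification, over all families in the classification, that a non-abelian simple group of order divisible by $p\ge5$ always possesses a $p$-element fused to a distinct power of itself; the groups of Lie type in cross characteristic, where one must control the Weyl-group action on $p$-elements of the various maximal tori, are the most delicate. A secondary difficulty is the bookkeeping when $\pcore_p(G)\ne1$: one must either carry out the module-theoretic line-stabiliser argument above, or first reduce---via Schur--Zassenhaus together with a Frattini argument---to a genuine subgroup $H\le G$ carrying a normal $p'$-subgroup $N$ with $H/N$ simple of order divisible by $p$, so that \autoref{pinormal} once more reduces the problem to the simple case.
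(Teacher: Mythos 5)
The serious gap is in your handling of the case $R\ne 1$. Your module-theoretic step tacitly assumes $\C_G(V)\ne G$: only then is $\C_G(V)$ a proper normal subgroup, hence contained in $R$, so that $S=G/R$ really is a composition factor of $G/\C_G(V)$. Nothing in your reductions prevents every minimal normal subgroup of $G$ inside $\pcore_p(G)$ from being central: the configuration ``$G$ quasisimple with $\Z(G)$ of order $p$'' passes both $\pcore_{p'}(G)=1$ and $R\ne1$, and there $V\le\Z(G)$, $\C_G(V)=G$, and the line-stabiliser argument says nothing. You also cannot recurse to $G/V$, because quotients of $p$-irrational groups by central $p$-subgroups need not be $p$-irrational (the paper's example $\texttt{SmallGroup}(16,3)$, which is irrational yet maps onto $D_8$, is exactly this phenomenon), and excluding the quasisimple configuration would require controlling Schur multipliers of the candidate simple quotients --- further classification input absent from your proposal. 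Your fallback via Schur--Zassenhaus and a Frattini argument fails for the same structural reason: once $\pcore_{p'}(G)=1$, the radical $R$ has nontrivial $p$-core, so it supplies no normal $p'$-subgroup to which \autoref{pinormal} could be applied. (For what it is worth, the non-central case of your argument can be completed without any classification: if $G/\C_G(V)$ has even order, some $2$-element of $G$ inverts a nontrivial $x\in V$, so $x$ is a real element of order $p$, contradicting $p$-irrationality since $p$ is odd; if it has odd order, it is solvable by Feit--Thompson and $G$ would be $p$-solvable. The central case is the genuine hole.)

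Second, your key claim about simple groups is true, but it is itself a paper-sized theorem, and your cross-characteristic sketch is flawed: the Weyl group moving the eigenvalues of a semisimple element only makes $x$ conjugate to another torus element, not to a \emph{power} of $x$; the existence of the genuinely irrational exceptions $\PSL(2,3^{2f+1})$ (for $p=3$), $\Sz(q)$ and ${}^2G_2(q)$ (for $p=2$, see \autoref{2simple}) shows any such argument must be delicate. The paper short-circuits both difficulties with one observation you missed: for odd $p$, a $p$-irrational group has \emph{no real elements of order $p$} at all (if $x^g=x^{-1}$ then $g\in\N_G(\langle x\rangle)=\C_G(x)$, forcing $x^2=1$). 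This much weaker property is already classified by Dolfi--Malle--Navarro: after reducing to $\pcore^{2'}(G)=G=\pcore^{p'}(G)$ and $\pcore_{p'}(G)=1$, their Theorem~A forces $G$ to be a direct product of simple groups --- which eliminates your entire $R\ne 1$ case --- and their Theorem~2.1, combined with Burnside's transfer theorem to rule out cyclic Sylow $p$-subgroups, leaves only $\PSL(2,p^{2f+1})$ with $p\equiv 3\pmod 4$. That group has just two conjugacy classes of nontrivial $p$-elements, so every such element is conjugate to a distinct power as soon as $p-1>2$. The viable repair of your proof is therefore to replace both of your ``I would show'' steps by this reduction to real elements together with citations of \cite[Theorems~A and~2.1]{DMN}.
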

\begin{proof}
Let $G$ be $p$-irrational. In order to show that $G$ is $p$-solvable, we may assume that $\pcore^{2'}(G)=G=\pcore^{p'}(G)$. By \autoref{pinormal}, we can also assume that $\pcore_{p'}(G)=1$. Since $G$ is $p$-irrational, it follows that $G$ does not contain any real elements of order $p$. By \cite[Theorem~A]{DMN}, we conclude that $G$ is a direct product of certain simple groups. Hence, we may assume that $G$ itself is simple. If $G$ has cyclic Sylow $p$-subgroups, we obtain a contradiction via Burnside's transfer theorem. Consequently, \cite[Theorem~2.1]{DMN} yields $G\cong\PSL(2,p^{2f+1})$ and $p\equiv 3\pmod{4}$. In particular, the upper unitriangular matrices form an elementary abelian Sylow $p$-subgroup $P$ and $\N_G(P)$ is the set of upper triangular matrices with determinant $1$ (modulo $\langle -I_2\rangle$ of course). It follows that $G$ has only two conjugacy classes of non-trivial $p$-elements. Hence, $G$ can only be $p$-irrational if $p-1\le 2$. However this was explicitly excluded. 
\end{proof}

\begin{Thm}\label{main}
Every $2'$-irrational group has a normal Sylow $2$-subgroup. In particular, $2'$-irrational groups are solvable.
\end{Thm}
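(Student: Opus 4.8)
I will prove the structural assertion — that $G$ has a normal Sylow $2$-subgroup — and then read off solvability. Throughout, $G$ denotes a $2'$-irrational group, so by definition $\C_G(x)=\N_G(\langle x\rangle)$ for every element $x$ of odd order. The one elementary observation I will use repeatedly is this: if an involution $t\in G$ inverts an element $x$ of odd order $>1$, then $x^{-1}\neq x$ while $\langle x^{-1}\rangle=\langle x\rangle$, so $t\in\N_G(\langle x\rangle)\setminus\C_G(x)$, contradicting $2'$-irrationality. Hence no involution of $G$ inverts a nontrivial element of odd order.

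The plan is to argue by induction on $|G|$, taking $G$ of least order among the counterexamples to the claim that a $2'$-irrational group has a normal Sylow $2$-subgroup. First I reduce to $\pcore_2(G)=1$: if $\pcore_2(G)\neq1$, then $\pcore_2(G)$ is a normal $2$-subgroup, hence a normal $\pi'$-subgroup for $\pi=2'$, so by \autoref{pinormal} the smaller group $G/\pcore_2(G)$ is again $2'$-irrational and therefore has a normal Sylow $2$-subgroup. Its preimage in $G$ is a normal $2$-subgroup containing a Sylow $2$-subgroup of $G$, hence a normal Sylow $2$-subgroup — contradiction. So $\pcore_2(G)=1$.

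Next I eliminate components. Let $K$ be a component of $G$, i.e.\ a quasisimple subnormal subgroup. Since $G$ is $2'$-irrational it is $p$-irrational for every odd prime $p$, so by \autoref{pirr} it is $p$-solvable for all $p\ge5$; $p$-solvability passes to the subgroup $K$ and then to its quotient $K/\Z(K)$. Thus $K/\Z(K)$ would be a nonabelian simple group whose order is divisible by no prime $\ge5$, i.e.\ of order $2^a3^b$, which by Burnside's $p^aq^b$-theorem is solvable — a contradiction. Therefore $\E(G)=1$ and $\F^*(G)=\F(G)$. As $\pcore_2(G)=1$, the Fitting subgroup $\F(G)$ has odd order, and $\C_G(\F(G))=\C_G(\F^*(G))\le\F^*(G)=\F(G)$.

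Finally let $P\in\Syl_2(G)$; if $P=1$ then $G$ has odd order and the claim is trivial, so $P\neq1$. Since $\F(G)$ has odd order while $\C_P(\F(G))=P\cap\C_G(\F(G))\le P\cap\F(G)=1$, the group $P$ acts faithfully by conjugation on $\F(G)$; in particular every involution $t\in P$ acts nontrivially on the odd-order group $\F(G)$. The crux is to upgrade nontrivial action to genuine inversion of a group element (rather than merely of a section): in $\F(G)\rtimes\langle t\rangle$ one computes $(xt)^2=x\,x^t$, so the involutions in the coset $\F(G)t$ are exactly the $xt$ with $x^t=x^{-1}$, corresponding to the elements of $\F(G)$ inverted by $t$ together with $t$ itself; were $t$ the only one, it would be the unique involution of $\F(G)\langle t\rangle$ and hence central, forcing $t$ to centralise $\F(G)$, against faithfulness. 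Thus $t$ inverts some $x\in\F(G)$ of odd order $>1$, contradicting the observation of the first paragraph. This contradiction shows no counterexample exists, so $G$ has a normal Sylow $2$-subgroup $P$, and then $G=P\rtimes H$ with $|H|$ odd, whence $H$ is solvable by the Feit–Thompson theorem and $G$ is solvable. The genuinely delicate step is the last one — extracting an honestly inverted element rather than a nontrivial action on a chief factor — which the involution-counting argument settles cleanly and, pleasantly, lets us bypass any case analysis over minimal normal subgroups.
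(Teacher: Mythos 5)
Your proof is correct, but it follows a genuinely different route from the paper's. Both arguments run on the same elementary engine (an involution $t$ inverting an element $x\ne 1$ of odd order would lie in $\N_G(\langle x\rangle)\setminus\C_G(x)$, violating $2'$-irrationality), and both perform the identical reduction to $\pcore_2(G)=1$ via \autoref{pinormal} and induction. The crux differs. The paper observes that for any involution $x$ and any $g\in G$ the subgroup $\langle x,gxg^{-1}\rangle$ is dihedral, hence by that observation a $2$-group, and then Baer's theorem \cite[Theorem~3.8.2]{Gorenstein} puts $x\in\pcore_2(G)$ immediately; this keeps the structural claim classification-free, with Feit--Thompson needed only for the final solvability assertion. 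You instead deploy the $\F^*$-machinery: you eliminate components by combining \autoref{pirr} with Burnside's $p^aq^b$-theorem (a component $K$ would have $K/\Z(K)$ a nonabelian simple $\{2,3\}$-group), then use $\C_G(\F^*(G))\le\F^*(G)$ to get a faithful action of $P\in\Syl_2(G)$ on the odd-order group $\F(G)$, and your count of involutions in $\F(G)\langle t\rangle$ (a unique involution would be central, forcing $t$ to centralize $\F(G)$) correctly extracts a genuinely inverted element rather than a mere nontrivial action. That counting step is a neat elementary substitute for Baer's theorem, but the component elimination is not free: \autoref{pirr} rests on \cite{DMN} and hence on the classification of finite simple groups, so your proof of the normal-Sylow-$2$ statement is CFSG-dependent where the paper's is not. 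In exchange, your route avoids Baer's theorem entirely and shows how the result falls out of standard $\F^*$-theory; the paper's route buys brevity and logical economy.
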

\begin{proof}
The claim follows from a more general result in \cite[Proposition~6.4]{DNT}. For the convenience of the reader we present the proof. Let $G$ be $2'$-irrational and $P\in\Syl_2(G)$. Arguing by induction on $|P|$, we may assume that $P\ne 1$. Let $x\in P$ be an involution and $g\in G$. Then $\langle x,gxg^{-1}\rangle$ is a dihedral group and since $G$ is $2'$-irrational, $\langle x,gxg^{-1}\rangle$ must be a $2$-group. Therefore, Baer's theorem~\cite[Theorem~3.8.2]{Gorenstein} implies $x\in\pcore_2(G)\ne 1$. By \autoref{pinormal}, $G/\pcore_2(G)$ is $2'$-irrational and induction shows $P/\pcore_2(G)\unlhd G/\pcore_2(G)$. It follows that $P\unlhd G$.
The last claim is a consequence of the Feit-Thompson Theorem (or of \autoref{pirr}).
\end{proof}

If $G$ is irrational (instead of $2'$-irrational), then the involutions in $G$ form an elementary abelian normal subgroup.

By \cite[Theorem~C]{DMN}, the $3$-irrational non-abelian simple groups are precisely $\PSL(2,3^{2f + 1})$ with $f\ge 1$ and the Suzuki groups $\Sz(q)$ (the only non-abelian simple groups whose order is not divisible by $3$). We now aim to describe the simple groups that are $2$-irrational.

\begin{Lem}\label{Sylow2irr} 
Let $P \in \Syl_2(G)$. Then $G$ is $2$-irrational if and only if $P$ is irrational. 
\end{Lem}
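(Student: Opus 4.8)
The forward implication is immediate: since $P$ is a subgroup of $G$ and subgroups of $2$-irrational groups are $2$-irrational, $P$ itself is $2$-irrational; as $P$ is a $2$-group, this is precisely the statement that $P$ is irrational. The content of the lemma therefore lies in the converse, and I would concentrate all the effort there.

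For the converse, assume $P$ is irrational and fix an arbitrary $2$-element $x\in G$; the goal is to prove $\C_G(x)=\N_G(\langle x\rangle)$. Write $H:=\N_G(\langle x\rangle)$ and $C:=\C_G(x)=\C_G(\langle x\rangle)$. The first key observation is that $\langle x\rangle$ is a cyclic $2$-group, so $\Aut(\langle x\rangle)$ is a $2$-group; since $C\trianglelefteq H$ with $H/C$ embedding into $\Aut(\langle x\rangle)$, the index $[H:C]$ is a power of $2$. The second observation is that $\langle x\rangle$ is a normal $2$-subgroup of $H$, hence $\langle x\rangle\le\pcore_2(H)$, so $x$ lies in every Sylow $2$-subgroup of $H$.

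Now I would pick $Q\in\Syl_2(H)$. Because $H/C$ is a $2$-group, $Q$ maps onto $H/C$, so the automorphisms of $\langle x\rangle$ induced by $Q$ coincide with those induced by $H$. On the other hand, $Q$ is a $2$-subgroup of $G$, hence contained in some conjugate $P^g$ of $P$, which is again irrational, so its subgroup $Q$ is irrational. Applying irrationality of $Q$ to the element $x\in Q$ gives $\C_Q(x)=\N_Q(\langle x\rangle)$, and the right-hand side equals $Q$ because $Q\le H$ normalizes $\langle x\rangle$. Thus $Q$ centralizes $x$, so $Q$ induces only the trivial automorphism of $\langle x\rangle$; combined with the previous sentence this forces $H/C=1$, that is $\N_G(\langle x\rangle)=\C_G(x)$, as required.

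The crux --- and the only step that is not routine --- is the reduction from $H$ to its Sylow $2$-subgroup: one must notice that $\Aut(\langle x\rangle)$ is a $2$-group so that $[H:C]$ is a $2$-power, and that $x$ is forced into every Sylow $2$-subgroup of $H$. Once these two facts are in place, irrationality of the single $2$-group $Q$ does all the work, and no genuine computation is needed. The minor points to verify carefully are that $x\in Q$ (which is exactly the normal-subgroup observation) and that passing to a conjugate $P^g$ preserves irrationality.
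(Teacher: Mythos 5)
Your proof is correct and rests on the same two pillars as the paper's argument: that $\Aut(\langle x\rangle)$ is a $2$-group, and that any $2$-subgroup of $G$ lies in a conjugate of $P$, whose irrationality is inherited by subgroups. The difference is only in packaging — the paper argues element-by-element, replacing each $y\in\N_G(\langle x\rangle)$ by its $2$-part and conjugating the $2$-group $\langle x,y\rangle$ into $P$, whereas you run the same reduction globally through a Sylow $2$-subgroup $Q$ of $\N_G(\langle x\rangle)$, using that $\N_G(\langle x\rangle)/\C_G(x)$ embeds into $\Aut(\langle x\rangle)$ and that $\langle x\rangle\le\pcore_2(\N_G(\langle x\rangle))\le Q$.
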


\begin{proof} 
If $G$ is $2$-irrational, then clearly $P \leq G$ is irrational. 
Assume conversely that $P$ is irrational. Let $x\in G$ be a $2$-element and $y\in\N_G(\langle x\rangle)$. Since $\Aut(\langle x\rangle)$ is a $2$-group, we may replace $y$ by its $2$-part. Then $\langle x,y\rangle$ is a $2$-group and after conjugation we have $x,y\in P$. Now the assumption gives $y\in\C_P(x)\subseteq\C_G(x)$. Hence, $G$ is $2$-irrational. 
\end{proof}

Walter's description~\cite{Walter} of all non-solvable groups with abelian Sylow $2$-subgroups provides many examples of non-solvable $2$-irrational groups. Our next lemma gives a necessary condition for $2$-irrationality.

\begin{Lem}
Let $P$ be an irrational $2$-group. Let $s$ be the number of conjugacy classes of involutions of $P$, and let $d$ be the minimal number of generators of $P$. Then $2^d\le s+1$. In particular, $\lvert\Omega(P)\rvert\ge|P/\Phi(P)|$ where $\Omega(P):=\langle x\in P:x^2=1\rangle$.
\end{Lem}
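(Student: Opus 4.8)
The plan is to recast the problem in terms of character theory, exploiting irrationality to control which conjugacy classes are real. First I would observe that, because $P$ is irrational, its real conjugacy classes are precisely the class of $1$ together with the classes of involutions. Indeed, if $x\in P$ has order at least $4$, then $x^{-1}$ is a power of $x$ with $\langle x^{-1}\rangle=\langle x\rangle$ but $x^{-1}\ne x$, so irrationality forbids $x\sim x^{-1}$ and the class of $x$ is not real; conversely every element of order at most $2$ is trivially real. Hence the number of real classes of $P$ equals $s+1$.

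Next I would invoke the classical theorem of Burnside that the number of real-valued irreducible characters of a finite group equals the number of its real conjugacy classes. Combined with the previous step this shows that $P$ has exactly $s+1$ real irreducible characters, so it suffices to produce at least $2^d$ of them. For this I would count the real \emph{linear} characters: a homomorphism $P\to\{\pm1\}$ is real, and since it annihilates $P'$ and every square it factors through $P/\Phi(P)$, while conversely every character of the elementary abelian group $P/\Phi(P)$ of rank $d$ arises this way. Thus $\Hom(P,\{\pm1\})\cong\Hom(P/\Phi(P),\{\pm1\})$ has order $2^d=|P/\Phi(P)|$, giving $2^d$ distinct real irreducible characters and therefore $s+1\ge 2^d$, as claimed. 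The additional statement then follows by a direct count: $\Omega(P)$ contains all $1+(\text{number of involutions})\ge 1+s\ge 2^d=|P/\Phi(P)|$ elements of order dividing $2$.

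I expect the only real content to lie in the first step, namely the observation that irrationality pins down the real classes as exactly the identity and the involution classes; everything afterwards is routine Burnside-plus-linear-character bookkeeping. One small point deserving care is that the lower bound $2^d$ is obtained using \emph{only} the real linear characters, so that possible irreducible characters with Frobenius--Schur indicator $-1$ (which would decrease a naive count of elements of order dividing $2$) play no role and create no obstacle; they can only increase the number of real irreducibles beyond $2^d$, which is harmless.
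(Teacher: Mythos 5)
Your proof is correct and takes essentially the same route as the paper: irrationality forces the real classes to be exactly the identity class and the $s$ involution classes, Brauer's permutation lemma (your ``Burnside'' count) converts this into exactly $s+1$ real irreducible characters, and the $2^d$ characters inflated from $P/\Phi(P)$ --- which are precisely your homomorphisms $P\to\{\pm1\}$ --- yield $2^d\le s+1$. Your final count for $\Omega(P)$ is also the paper's; the closing remark about Frobenius--Schur indicators is superfluous but harmless.
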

\begin{proof}
By Brauer's permutation lemma, $P$ has exactly $1+s$ real irreducible characters. On the other hand, all the inflations of the elementary abelian group $P/\Phi(P)$ are real. Now the first claim follows from $|P/\Phi(P)|=2^d$.
The second claim follows, because $\Omega(P)$ contains all involutions plus the identity and therefore $\lvert\Omega(P)\rvert\ge s+1$.
\end{proof}

\begin{Thm}\label{2simple}
Let $G$ be a finite non-abelian simple group. Then $G$ is $2$-irrational if and only if $G$ is one of the following:
\begin{itemize}
\item $\PSL(2,q)$ with $q\equiv 0,3,5\pmod{8}$,
\item $\Sz(q)$,
\item $^2G_2(q)$,
\item $J_1$.
\end{itemize}
\end{Thm}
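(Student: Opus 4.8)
The plan is to use \autoref{Sylow2irr} to replace $2$-irrationality of $G$ by irrationality of a Sylow $2$-subgroup $P$, and then to run through the finite non-abelian simple groups according to the isomorphism type of $P$. The constraint I would exploit repeatedly is that subgroups of irrational groups are again irrational. Since the two non-abelian groups of order $8$, namely $D_8$ and $Q_8$, are non-irrational (in each the cyclic subgroup of order $4$ is inverted by an outer element), an irrational $2$-group contains no non-abelian subgroup of order $8$; the same reasoning excludes every dihedral, semidihedral and generalised quaternion group of order $\ge 8$ as a subgroup of $P$, as each of these inverts or properly powers a cyclic subgroup. This already forces $P$ to be either abelian or a rather restricted non-abelian $2$-group, and I would split the argument accordingly.

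For the abelian case I would invoke Walter's classification \cite{Walter} of the non-abelian simple groups with abelian Sylow $2$-subgroups: these are exactly $\PSL(2,q)$ with $q\equiv 0,3,5\pmod 8$ (elementary abelian of order $q$ for even $q$, and the Klein four group for $q\equiv 3,5$), the Ree groups ${}^2G_2(q)$ and the Janko group $J_1$ (the latter two with elementary abelian Sylow $2$-subgroup of order $8$). Since every abelian group is irrational, each of these is $2$-irrational, and conversely any simple $G$ with abelian irrational $P$ occurs here. The restriction $q\equiv 0\pmod 8$ (rather than merely $q$ even) reflects the isomorphism $\PSL(2,4)\cong\PSL(2,5)$, already counted under $q\equiv 5$; and one recovers from the Gorenstein--Walter theorem on dihedral Sylow $2$-subgroups that among $\PSL(2,q)$ with $q$ odd only the cases $q\equiv 3,5\pmod 8$ (Klein four) survive, those with $q\equiv\pm1\pmod 8$ giving a dihedral group of order $\ge 8$.

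It remains to treat non-abelian $P$, where the first task is to check that the Suzuki groups qualify. The Sylow $2$-subgroup of $\Sz(q)$, $q=2^{2f+1}$, is the group on $\{(a,b):a,b\in\FF_q\}$ with $(a,b)(c,d)=(a+c,\,b+d+ac^\theta)$ for the relevant field automorphism $\theta$; I would compute directly that its involutions are precisely the central elements $(0,b)$ and that an element of order $4$ is never conjugate to its inverse, so that this $2$-group is irrational. The harder direction is to show that no other simple group has a non-abelian irrational Sylow $2$-subgroup. For this I would appeal to the classification of finite simple groups together with the known determination of their Sylow $2$-subgroups: the groups with dihedral, semidihedral or wreathed Sylow $2$-subgroups (Gorenstein--Walter, Alperin--Brauer--Gorenstein) are excluded by the first paragraph; the alternating groups $A_n$ with $n\ge 6$ contain $A_6\cong\PSL(2,9)$ and hence a copy of $D_8$; essentially all groups of Lie type of larger rank likewise contain a subgroup isomorphic to $D_8$ or $Q_8$; and the remaining low-rank and sporadic cases are checked individually, leaving only the Suzuki groups, whose Sylow $2$-subgroups are precisely the $D_8$- and $Q_8$-free non-abelian candidates.

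I expect this last step to be the main obstacle: turning the clean local condition ``$P$ has no non-abelian subgroup of order $8$'' into a complete list requires the full strength of the classification and a careful sweep through the possible Sylow $2$-structures, and in the surviving non-abelian case one must identify the relevant groups as Suzuki $2$-groups and confirm that they force $G\cong\Sz(q)$. The auxiliary inequality $\lvert\Omega(P)\rvert\ge|P/\Phi(P)|$ from the preceding lemma can be used to prune candidates whose Sylow $2$-subgroups have too few involutions relative to their rank, but it does not by itself replace the classification.
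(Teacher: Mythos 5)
Your overall architecture is sound in two of its three parts: the reduction via \autoref{Sylow2irr}, the treatment of the abelian case by Walter's classification (which is arguably cleaner than the paper's family-by-family sweep, since it yields $\PSL(2,q)$ with $q\equiv 0,3,5\pmod 8$, ${}^2G_2(q)$ and $J_1$ in one stroke), and the direct verification that the Sylow $2$-subgroups of $\Sz(q)$ are irrational all work. The genuine gap is in the non-abelian case: your pruning criterion --- that an irrational $2$-group contains no $D_8$ and no $Q_8$ --- is necessary but far from sufficient, and it concretely fails to eliminate $\PSU(3,2^m)$ for \emph{even} $m$. Take $G=\PSU(3,4)$ with Sylow $2$-subgroup $P$ of order $64$, consisting of the matrices $M(x,y)$ in the paper's notation. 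Every involution of $P$ is central in $P$ (involutions are exactly the $M(0,y)$ with $y\in\FF_q^\times$), so $P$ contains no $D_8$. A copy of $Q_8$ would require elements $M(x,y)$, $M(a,b)$ of order $4$ with $a^qx+ax^q=x^{1+q}$ and $a^{1+q}=x^{1+q}$; setting $t=a/x$ this forces $t^q+t=1$ and $t^{q+1}=1$, hence $t^2+t+1=0$ and $t^q=t^{-1}$, which is impossible when $q=2^m\equiv 1\pmod 3$, i.e.\ when $m$ is even. So $P$ is a non-abelian $2$-group containing neither $D_8$ nor $Q_8$ --- indeed a Suzuki $2$-group in Higman's sense, with all involutions central and permuted transitively by automorphisms --- and yet $P$ is \emph{not} irrational: as the paper computes, $A=M(1,\zeta)$ is conjugate to $A^{-1}$ inside $P$ by $B=M(\zeta,\xi)$. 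Consequently your concluding claim, that the $D_8$- and $Q_8$-free non-abelian candidates are ``precisely'' the Sylow $2$-subgroups of Suzuki groups and that identifying them as Suzuki $2$-groups ``forces $G\cong\Sz(q)$'', is false as stated; $\PSU(3,4)$ is a simple group your sieve never removes, but it is not on the list.

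The underlying issue is that irrationality of $P$ is a fusion condition strictly finer than subgroup-freeness (the modular group of order $16$ is already $D_8$- and $Q_8$-free but not irrational), and the Sylow $2$-subgroups of $\Sz(q)$ and of $\PSU(3,2^m)$ can only be told apart at that finer level: in both one asks whether $1$ lies in the image of an additive map of the form $t\mapsto t^\theta+t$, and the answer differs ($1$ is not in the image for $\Sz(q)$, but is for $\PSU(3,2^m)$). So to repair your argument you must supplement the $D_8$/$Q_8$ criterion with explicit order-$4$ fusion computations in the surviving non-abelian candidates, exactly as the paper does for $\PSU(3,2^m)$ (and, via embedding its Sylow $2$-subgroup, for $\PSU(4,2^m)$ and $\PSU(5,2^m)$). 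Separately, your step ``essentially all groups of Lie type of larger rank contain $D_8$ or $Q_8$'' needs a mechanism rather than an assertion: the paper gets it from Barry--Ward, which embeds $\SL(3,q)$, $\PSL(3,q)$ or $\SL(2,q^2)$ into every group of Lie type outside a short explicit list of exceptions, and those exceptions ($\PSp(4,q)$, the unitary cases above, and ${}^2F_4(2^{2f+1})$, the last requiring Malle's determination of its maximal subgroups to locate a $\GL(2,3)$) must then be handled one by one.
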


\begin{proof} 
We use the classification of the finite simple groups. By \autoref{Sylow2irr} it suffices to decide whether a Sylow $2$-subgroup is irrational. Throughout $q$ will denote a power of a prime $p$.

Clearly $A_5$ is $2$-irrational, whereas $A_6$ is not as it contains a dihedral group of order $8$. Hence the only non-abelian simple alternating group that is $2$-irrational is $A_5 \cong \PSL(2, 5)$.

Now we consider the groups $\PSL(2, q)$ and $\PSL(3, q)$. The groups $\PSL(2, 2^f)$ are always $2$-irrational, as they have elementary abelian Sylow $2$-subgroups (note that $\PSL(2,4)\cong\PSL(2,5)$). The groups $\PSL(2, q)$, $q$ odd, have dihedral Sylow $2$-subgroups, so they happen to be $2$-irrational if and only if the Sylow $2$-subgroups are abelian, i.\,e. they have order $4$ and this happens if and only if $q$ is congruent to $3$ or $5$ modulo $8$. A group $\PSL(3, q)$ can never be $2$-irrational: Note first that the Sylow $2$-subgroups of $\PSL(3, q)$ and $\SL(3, q)$ are isomorphic. Clearly, if $q$ is even, then the upper triangular matrices contain a subgroup isomorphic to a dihedral group of order $8$. If $q$ is odd, $\SL(3, q)$ contains the elements 
\begin{equation} 
s =  \begin{pmatrix} 0 & -1 & 0 \\ 1 & 0 & 0 \\ 0 & 0 & 1 \end{pmatrix} \qquad \text{and} \qquad  t =  \begin{pmatrix} 0 & 1 & 0 \\ 1 & 0 & 0 \\ 0 & 0 & -1\end{pmatrix}\label{st}
\end{equation} 
of order $4$ and $2$, respectively. As conjugation by $t$ inverts $s$, they generate together a dihedral group of order $8$.

Next we consider general simple groups of Lie type. In the proof of \cite[Theorem~1]{BarryWard}, they exhibit in each simple group of Lie type a subgroup isomorphic to $\SL(3, q)$, $\PSL(3, q)$ or $\SL(2, q^2)$ which is not $2$-irrational, except in the following cases:
\[ A_1(q),\quad C_2(q),\quad {}^2A_2(q^2),\quad {}^2A_3(2^{2m}),\quad {}^2A_4(2^{2m}),\quad {}^2B_2(2^{2f+1}),\quad {}^2F_4(2^{2f+1}),\quad {}^2G_2(3^{2f+1}).  \]

$A_1(q) = \PSL(2, q)$: We already handled these groups.

$C_2(q) = \PSp(4, q)$: For odd $q$ it is known that the Sylow $2$-subgroups of $\Sp(4, q)$ are isomorphic to $P = Q \wr C_2$, where $Q$ is a Sylow $2$-subgroup of $\Sp(2, q) \cong \SL(2, q)$, which is quaternion of order at least $8$ (see \cite{CF}). Now $\bar{P} \in \Syl_2(\PSp(4, q))$ is isomorphic to $P/(\Z(\Sp(4, q)) \cap P)$, where $\Z(\Sp(4, q)) = \langle -I_4 \rangle$. Taking an element $x$ of order $4$ in $Q$, then it can be checked that the image of $(x, 1)$ under $Q \times Q \hookrightarrow Q \wr C_2 \twoheadrightarrow \bar{P}$ is also of order $4$ and conjugate to its inverse. So $\PSp(4, q)$, $q$ odd, cannot be $2$-irrational.\\
If $q$ is even we have that $\PSp(4, 2) \cong S_6$ which is not $2$-irrational, embeds into $\PSp(4, q)$ and we are done.

${}^2A_2(q^2) = \PSU(3, q)$: If $q$ is odd, the matrices $s$ and $t$ in \eqref{st} are matrices of $\SU(3, q)$ 
that generate a subgroup $D\cong D_8$. Note that $\Z(\SU(3, q))\cap D = 1$, so that $\PSU(3, q)$ is not $2$-irrational. \\
Now assume that $q$ is even. By \cite{Collins}, a Sylow $2$-subgroup $P$ of $\PSU(3, q)$ is given by the matrices 
\[ M(x, y) = \begin{pmatrix} 1 & 0 & 0 \\ x & 1 & 0 \\ y & x^q & 1 \end{pmatrix}, \qquad x, y \in \FF_{q^2},\ y + y^q = x^{1 + q}. \] 
Taking $x = 1$ and let $y = \zeta \in \FF_{q^2}$ be any solution of $X^q + X + 1 = 0$ (which exists since the trace map $\FF_{q^2}\to\FF_q$, $a\mapsto a+a^q$ is surjective), we get an element $A = M(1, \zeta) \in P$ of order $4$. Now let $\xi \in \FF_{q^2}$ be any root of $X^q + X + \zeta^{1+q}$ (which exists), then $B = M(\zeta, \xi) \in P$ and it is straight forward to verify that $A^B = A^{-1}$. Thus $P$ cannot be $2$-irrational.

${}^2A_n(2^{2m}) = \PSU(n+1, 2^{m})$, $n \in \{3, 4\}$: A Sylow $2$-subgroup of $\PSU(3, 2^m)$ embeds into $\PSU(4, 2^{m})$ and into $\PSU(5, 2^{m})$ (note that the center of $\SU(n, 2^m)$ always has odd order), so these groups cannot be $2$-irrational.

${}^2B_2(2^{2f+1}) = \Sz(2^{2f+1})$: The Suzuki groups are $2$-irrational: This can either be seen from their generic character table or as their Sylow $2$-subgroups of exponent $4$ are irrational, as none of the elements of order $4$ is conjugate to its inverse (see \cite{Suzuki}).

${}^2F_4(2^{2f+1})$: Using, for example, Malle's description of the maximal subgroups of ${}^2F_4(2^{2f+1})$ in \cite{Malle2F4}, we see that ${}^2F_4(2^{2f+1})$ cannot be irrational as it contains $\GL(2,3)$ as a subgroup.

${}^2G_2(3^{2f+1})$: The small Ree groups are $2$-irrational as they have elementary abelian Sylow $2$-subgroups of order $8$.

For the sporadic simple groups we consult the ATLAS~\cite{Atlas} or the character tables in GAP~\cite{GAP48}. This reveals that $J_1$ is indeed the only $2$-irrational sporadic simple group. 
\end{proof}

So all irrational groups that are Sylow $2$-subgroups of simple groups are elementary abelian or Suzuki $2$-groups. Unfortunately, not every non-abelian composition factor of a $2$-irrational group belongs to the list in \autoref{2simple}. For example, the $2$-irrational group $G:=\texttt{PerfectGroup}(2^{12}3^25,19)$ satisfies $G/\F(G)\cong A_6$ (this can be checked with GAP).

In the following we consider special families of irrational groups.

\begin{Prop}\label{collapse}\hfill
\begin{enumerate}[(i)]
 \item Every irrational metacyclic group is abelian.
 \item Every irrational supersolvable group is nilpotent.
\end{enumerate}
\end{Prop}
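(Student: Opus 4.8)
The two parts are best handled in sequence, with (i) feeding directly into the proof of (ii). For (i) the plan is to exploit that a normal cyclic subgroup is normalized by everything. Write $G$ as an extension with $\langle a\rangle\unlhd G$ cyclic and $G/\langle a\rangle=\langle b\langle a\rangle\rangle$ cyclic. Since $\langle a\rangle$ is normal we have $\N_G(\langle a\rangle)=G$, so irrationality forces
\[\C_G(a)=\N_G(\langle a\rangle)=G,\]
that is, $a\in\Z(G)$. As $G=\langle a\rangle\langle b\rangle$ with $a$ central, $G$ is abelian. This part is essentially immediate.

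For (ii) I would argue by induction on $|G|$. Let $G$ be irrational and supersolvable; if $G$ is nilpotent there is nothing to prove, so assume it is not. As noted in the excerpt, subgroups of irrational groups are again irrational, and subgroups of supersolvable groups are supersolvable. Hence every proper subgroup of $G$ is irrational and supersolvable, and so nilpotent by the inductive hypothesis. Therefore $G$ is a \emph{minimal non-nilpotent group} (a Schmidt group): not nilpotent, but with all proper subgroups nilpotent.

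The decisive step is then to invoke the classical Schmidt--R\'edei structure theorem for such groups: $G$ has order $p^aq^b$ for two distinct primes, a normal Sylow $p$-subgroup $P$, a cyclic Sylow $q$-subgroup $Q$, and $P/\Phi(P)$ is a chief factor of $G$. Here supersolvability does the real work: every chief factor of a supersolvable group has prime order, so $\lvert P/\Phi(P)\rvert=p$, which for a $p$-group means $P$ is cyclic. Consequently $G=P\rtimes Q$ with both $P$ and $Q$ cyclic, so $G$ is metacyclic; by part (i) it must be abelian, hence nilpotent, contradicting our assumption. I expect the main obstacle to be marshalling the exact form of the Schmidt--R\'edei theorem and cleanly extracting from supersolvability that $P/\Phi(P)$ has prime order (so that $P$ is forced to be cyclic); once this reduction to the metacyclic case is in place, part (i) closes the argument with no further computation.
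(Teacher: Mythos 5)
Your proposal is correct, and while part (i) coincides with the paper's argument (normality of $\langle a\rangle$ plus irrationality puts $a$ in the center, forcing $G$ abelian), your part (ii) takes a genuinely different route. The paper inducts through a maximal normal subgroup $N\unlhd G$ of prime index $p$: by induction $N$ is nilpotent, and then a $p$-element $g\in G\setminus N$ is shown to centralize each Sylow $q$-subgroup $Q$ of $N$ ($q\ne p$) by using supersolvability to produce a $\langle g\rangle$-stable maximal subgroup $Q_1<Q$, Thompson's critical subgroup theorem to reduce to the case $\exp Q=q$, and a counting argument showing that among the $q^{t-1}$ subgroups of order $q$ meeting $Q_1$ trivially, some subgroup is $\langle g\rangle$-invariant, whence irrationality forces it to be centralized. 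You instead reduce to a minimal non-nilpotent group, invoke the Schmidt--R\'edei structure theorem ($G=P\rtimes Q$ with $P$ a normal Sylow $p$-subgroup, $Q$ cyclic, and $P/\Phi(P)$ a chief factor of $G$), use supersolvability to get $\lvert P/\Phi(P)\rvert=p$, hence $P$ cyclic by the Burnside basis theorem, so $G$ is metacyclic and part (i) finishes. Your argument is shorter and more modular: irrationality is used only once, through part (i), and the group theory is outsourced to a classical classification; it also explains why (i) naturally precedes (ii), whereas in the paper's proof (ii) is independent of (i). The paper's argument, in exchange, avoids the Schmidt--R\'edei classification (at the cost of Thompson's theorem, itself nontrivial) and exploits irrationality directly at the decisive step. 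It is worth noting that both proofs use supersolvability at exactly one pinch point -- you to force the chief factor $P/\Phi(P)$ to have order $p$, the paper to obtain the $G$-invariant maximal subgroup $Q_1$ -- which is unavoidable, since the statement fails for general solvable groups (e.g. irrational Frobenius groups such as $C_5^2\rtimes C_3$).
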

\begin{proof}\hfill
\begin{enumerate}[(i)]
\item Let $G$ be an irrational metacyclic group with cyclic $N = \langle x \rangle \unlhd G$ such that $G/N$ is also cyclic. Take $y \in G$, then $x^y \in \langle x \rangle$, and hence $[x, y] = 1$, so that $N \leq \Z(G)$. But then $G/\Z(G)$ is cyclic and hence $G$ is abelian.

\item Let $G$ be irrational and supersolvable. Let $N\unlhd G$ be a maximal normal subgroup. Since $G$ is solvable, $|G:N|=p$ is a prime. By induction on $|G|$, we may assume that $N$ is nilpotent. Let $g\in G\setminus N$ be a $p$-element, and let $Q$ be a Sylow $q$-subgroup of $N$ for some prime $q\ne p$. It suffices to show that $g$ acts trivially on $Q$. We may assume that $Q\ne 1$. By hypothesis, $Q$ contains a $\langle g\rangle$-stable maximal subgroup $Q_1$. By induction, $g$ acts trivially on $Q_1$. If $q=2$, then $\langle g\rangle$ clearly acts trivially on $Q/Q_1$ and therefore also on $Q$. Now let $q>2$.
By a result of Thompson (see \cite[Theorem~5.3.13]{Gorenstein}), we may assume that $Q$ has exponent $q$. The number of non-trivial subgroups which intersect $Q_1$ trivially is $(q^t-q^{t-1})/(q-1)=q^{t-1}$ where $|Q|=q^t$. Since $p\ne q$, at least one of these subgroups is normalized by $\langle g\rangle$. By irrationality, it must even by centralized. This implies the claim.\qedhere
\end{enumerate}
\end{proof}

\begin{Prop}
Let $G$ be an irrational Frobenius group with complement $K$. Then $K$ is cyclic of odd order. Conversely, every cyclic group of odd order occurs as a complement in an irrational Frobenius group. 
\end{Prop}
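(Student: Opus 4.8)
The plan is to prove the two implications separately. For the forward direction I would first show that $K$ has odd order and then upgrade to cyclic, and for the converse I would build an explicit Frobenius group by letting $K$ act by field multiplication.

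To see that $|K|$ is odd, recall from the remark following \autoref{main} that in an irrational group the involutions generate an elementary abelian \emph{normal} subgroup $V$. If $|K|$ were even, then $K$ would contain an involution $t$, and since $K\cap N=1$ we have $t\notin N$, so $V\not\le N$. Here $N$ denotes the (nontrivial) Frobenius kernel. A standard fact about Frobenius groups is that every normal subgroup is either contained in $N$ or contains $N$: indeed, if a normal subgroup met a nontrivial conjugate of $K$, then it would contain some $1\ne k\in K$, and since $n\mapsto n^{-1}n^{k}$ is a bijection of $N$ for the fixed-point-free action, it would contain all of $N$. Applying this to $V$ gives $N\le V$; but $V$ is abelian, so $t$ would centralize $N$, contradicting $\C_N(t)=1$ with $N\ne 1$. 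Hence $|K|$ is odd. Now $K$ is a Frobenius complement, so all its Sylow subgroups are cyclic or generalized quaternion; as $|K|$ is odd the quaternion case cannot occur and every Sylow subgroup of $K$ is cyclic. A finite group all of whose Sylow subgroups are cyclic is metacyclic (a $Z$-group), and $K$ is irrational as a subgroup of $G$. Thus \autoref{collapse}(i) forces $K$ to be abelian, and an abelian group with cyclic Sylow subgroups is cyclic. (Alternatively, a $Z$-group is supersolvable and \autoref{collapse}(ii) yields nilpotency, hence cyclicity, directly.)

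For the converse, given a cyclic group of odd order $n$, I would choose a residue $c$ modulo $n$ that is a unit with $c-1$ also a unit; this is possible precisely because $n$ is odd, since by the Chinese remainder theorem it suffices, for each prime $q\mid n$, to avoid the two classes $0$ and $1$ modulo $q$, which is possible exactly when $q>2$. By Dirichlet's theorem there is a prime $\ell\equiv c\pmod n$ with $\gcd(\ell,n)=\gcd(\ell-1,n)=1$. Setting $d=\operatorname{ord}_n(\ell)$, I let $N=(\FF_{\ell^{d}},+)$ and take $K$ to be the unique subgroup of order $n$ of the cyclic group $\FF_{\ell^{d}}^{\times}$, acting on $N$ by multiplication. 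Non-identity multiplications are fixed-point-free, so $G=N\rtimes K$ is a Frobenius group with cyclic complement $K\cong C_n$. To verify irrationality I would use the conjugation formulas in the semidirect product, namely that $(a,s)$ conjugates $(x,1)$ to $(sx,1)$ and $(0,k)$ to $((1-k)a,k)$. For $1\ne k\in K$ these give $\C_G(k)=\N_G(\langle k\rangle)=K$, while for $0\ne x\in N$ they give $\C_G(x)=N$ and $\N_G(\langle x\rangle)=N\rtimes(K\cap\FF_\ell^{\times})$; these coincide exactly because $\gcd(n,\ell-1)=1$ forces $K\cap\FF_\ell^{\times}=1$. Since every element of $G$ is conjugate to an element of $N$ or of $K$, this establishes that $G$ is irrational.

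The main obstacle is the converse. The natural first attempt, a kernel $N=C_p$ of prime order, fails: a generator of $N$ is conjugated by $K$ to a nontrivial power of itself, so such $G$ is never irrational. One is forced to take a higher-rank elementary abelian kernel together with the arithmetic condition $\gcd(n,\ell-1)=1$, which ensures that no element of $N$ is conjugate to a distinct power. Securing a prime $\ell$ with this property is exactly the point at which the oddness of $n$ re-enters, mirroring its role in the forward direction.
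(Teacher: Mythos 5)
Your proof is correct, and it splits cleanly into a part that matches the paper and a part that genuinely differs. The forward direction is essentially the paper's argument: where the paper deduces $2\nmid|K|$ by citing \autoref{main}, you deduce it from the remark following \autoref{main} (the involutions of an irrational group form an elementary abelian normal subgroup) combined with the standard dichotomy for normal subgroups of Frobenius groups, which you also sketch; after that both arguments run identically through the theory of Frobenius complements (odd order forces all Sylow subgroups of $K$ to be cyclic, hence $K$ is metacyclic) and \autoref{collapse} to conclude that $K$ is cyclic. The converse is where you diverge. The paper works in characteristic $2$: with $n$ the multiplicative order of $2$ modulo $|K|$ it embeds $K$ into a Singer cycle of $\GL(n,2)$ and takes $G=C_2^n\rtimes K$; every nontrivial kernel element is then an involution and so can never be conjugate to a distinct power of itself, making the kernel-side verification vacuous with no arithmetic condition at all. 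You instead work over $\FF_{\ell^d}$ for an odd prime $\ell$ produced by Dirichlet's theorem subject to $\gcd(\ell,n)=\gcd(\ell-1,n)=1$; your conjugation formulas are right, and the second gcd condition is exactly what forces $K\cap\FF_\ell^{\times}=1$, i.e.\ $\N_G(\langle x\rangle)=\C_G(x)$ for nonzero kernel elements $x$. This buys a mildly more general family of examples (kernels in any admissible odd characteristic), but the machinery is heavier than necessary: for odd $n$ the prime $\ell=2$ already satisfies both conditions, since $\gcd(2,n)=\gcd(1,n)=1$, and specializing your construction to $\ell=2$ recovers the paper's proof verbatim, with no appeal to Dirichlet and no choice of the residue $c$. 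So the ``main obstacle'' you describe---securing the prime $\ell$---dissolves once one notices that $\ell=2$ always works; the whole point of the paper's characteristic-$2$ trick is that an exponent-$2$ kernel makes the kernel-side irrationality automatic.
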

\begin{proof}
By \autoref{main}, $K$ has odd order. It follows from the theory of Frobenius groups that all Sylow subgroups of $K$ are cyclic and $K$ must be metacyclic (see \cite[Theorem~10.3.1]{Gorenstein}). As an irrational group, $K$ itself must be cyclic by \autoref{collapse}.

Conversely, let $K$ be any cyclic group of odd order. Let $n$ be the order of $2$ modulo $|K|$. Then $2^n\equiv 1\pmod{|K|}$ and $K$ can be embedded in a Singer cycle of $\GL(n,2)$. Since the Singer cycle acts fixed point freely, $G:=C_2^n\rtimes K$ is a Frobenius group. Obviously, $G$ is irrational. 
\end{proof}

One cannot say much about kernels in irrational Frobenius groups (apart from Thompson's Theorem that they are nilpotent). There are examples where the kernel is neither abelian nor a $p$-group. For instance take $(P\times C_{13}^2)\rtimes C_7$ where $P\in\Syl_2(\Sz(8))$ and $C_7$ acts diagonally on the direct product. 

\section{Examples}

In this section we illustrate by examples that the structure of (solvable) irrational groups can be rather wild. 
By \autoref{main}, it is natural to study irrational groups $G$ of odd order. Here it is no longer true that $G$ has a normal Sylow $p$-subgroup for some prime $p$. This can be seen from the direct product of the irrational groups $C_5^2\rtimes C_3$ and $C_3^4\rtimes C_5$. 
This can even happen for indecomposable groups as the central product of $5^{1+2}_+\rtimes C_3$ and $C_3^4\rtimes 5^{1+2}_+$ shows (the centers of order $5$ in both factors are identified).  

The Sylow $p$-subgroup of $\GL(p,p)$ of exponent $p$ shows that neither the derived length nor the nilpotency class of (nilpotent) irrational groups can be bounded. 
The following construction shows that there are irrational groups with arbitrary large Fitting length. Suppose we have given an irrational group $G$ of odd order $n$ (for example $G=C_n$). By Dirichlet's prime number theorem, there exists an odd prime $p\equiv 2\pmod{n}$. Clearly, $n$ is not divisible by $p$ and $(p-1,n)=1$. Let $V$ be any finite-dimensional faithful $\FF_pG$-module (for example the regular module). Then the semidirect product $\widehat{G}:=V\rtimes G$ satisfies $\F(\widehat{G})=V$, so that the Fitting length of $\widehat{G}$ exceeds that of $G$. 
Now let $g\in \widehat{G}$. 
If two powers of $g$ are conjugate in $\widehat{G}$, then the corresponding cosets are conjugate in $\widehat{G}/V\cong G$. Hence, if $g$ is $p$-regular, then $\N_{\widehat{G}}(\langle g\rangle)=\C_{\widehat{G}}(g)$. Now assume that $g\in V$, so that $g$ has order at most $p$. Since $(p-1,n)=1$, $g$ is not conjugate to any of its distinct powers. Thus again, $\N_{\widehat{G}}(\langle g\rangle)=\C_{\widehat{G}}(g)$ and $\widehat{G}$ is irrational. Now we can repeat the process by replacing $G$ with $\widehat{G}$.

In this way we obtain irrational groups with abelian Sylow $p$-subgroups. The following modification gives irrational groups with non-abelian Sylow $p$-subgroups. It is well-known that $G$ embeds into the symmetric group $S_n$ and $S_n$ embeds into the symplectic group $\Sp(2n,p)$ (sending a permutation matrix $M$ to $M\oplus M$). By Winter~\cite[Theorem~1]{Winter}, $G$ acts faithfully on the extraspecial group $V$ of order $p^{1+2n}$ and exponent $p$. This gives $\widehat{G}:=V\rtimes G$ with the desired properties.

From the examples above it seems that every irrational group has $p$-length $1$ for every prime $p$. This is unfortunately not true in general: The group $C_5^2\rtimes C_3$ acts faithfully on $C_3^{12}$, but the semidirect product $C_3^{12}\rtimes(C_5^2\rtimes C_3)$ is not irrational, since it contains $C_3\wr C_3$. Nevertheless there exists an irrational central extension of type $C_3^{13}.(C_5^2\rtimes C_3)$ of $3$-length $2$ (this can be checked with GAP). 

Since abelian groups are irrational, we finally describe the minimal non-abelian groups $G$ that are irrational (that is, every proper subgroup is abelian). 
First by \autoref{collapse}, we know that $G$ is not metacyclic. It turns out that this is the only constraint. Let $x \in G$ and $y \in \N_G(\langle x \rangle)$. Since $G$ is not metacyclic, $\langle x,y\rangle<G$ and $\langle x,y\rangle$ is abelian. This implies $y\in\C_G(x)$.  
Hence by the classification of minimal non-abelian groups (see e.\,g. \cite[Aufgabe~III.5.14]{Huppert} and \cite[Theorem~12.2]{SambaleBlocksBook}), $G$ is either a $p$-group of the form 
\[\langle x,y\mid x^{p^r}=y^{p^s}=[x,y]^p=[x,x,y]=[y,x,y]=1\rangle\]
with $r\ge s\ge 1$ or a group of the form $C_q^r \rtimes C_{p^s}$ for two distinct primes $p$, $q$ and $r \geq 2$, where a generator of $C_{p^s}$ acts on the elementary abelian $q$-group by a companion matrix of an irreducible divisor of $\frac{X^p-1}{X-1}$ of degree $r$.

\section*{Acknowledgment}
The work on this project started with a talk by the first author at a seminar in Kaiserslautern. We thank Alessandro Paolini for the invitation.
The first author is a postdoctoral researcher of the FWO (Research Foundation Flanders).
The second author is supported by the German Research Foundation (projects SA \mbox{2864/1-1} and SA \mbox{2864/3-1}).

\end{document}